\newtheorem{theorem}{Theorem}[section]
\newtheorem{lemma}[theorem]{Lemma}
\newtheorem{proposition}[theorem]{Proposition}
\newtheorem{problem}[theorem]{Problem}
\newtheorem{mainthm}[theorem]{Main Theorem}
\theoremstyle{definition}
\newtheorem{definition}[theorem]{Definition}
\newtheorem{example}[theorem]{Example}
\newtheorem*{xrem}{Remark}
\numberwithin{equation}{section}
\DeclareMathOperator{\len}{length}
\newcommand{\obj}[3]{\mathcal{F}^{#1}\mathbb{S}^{#2}\mathbf{G}_{#3}}
\def\F{{\mathbb F}}
\def\PSL2{PSL_2 (\mathbb{Z})}
\def\SL2{SL_2 (\mathbb{Z})}
\def\genset{\mathcal{X}}
\def\monoid{\left<\genset\right>}
\def\lbrack{\left[}
\def\rbrack{\right]}
\def\ad{{\rm ad}\ }
\def\indset{\mathcal{I}}
\def\indsetL4{\indset^*}
\def\Basis5{\mathcal{B}}
\def\Reps5{\mathcal{S}}
\def\freeassoc3{\F\left< A,B,C\right>}
\def\LieAB{\lbrack A,B\rbrack}
\def\algI{I}
\def\theAlg{\F\monoid/\idcomm_\sigma}
\def\idcomm{\mathcal{I}}
\def\newgenset{\mathcal{Y}}
\def\newidcomm{\mathcal{J}}
\def\newmonoid{\left<\newgenset\right>}
\def\redsys{\Gamma}
\def\id{\texttt{id}}
\def\Csub{\F[C]}
\def\overlap{\Phi}
\def\newAlg{\F\newmonoid/\newidcomm_\sigma}
\begin{document}


\baselineskip=17pt


\title{Lie polynomials in an algebra defined by a linearly twisted commutation relation}

\author{Rafael Reno S. Cantuba\\
Mathematics and Statistics Department\\ 
De La Salle University, Manila\\
2401 Taft Ave.\\
1004 Manila, Philippines\\
E-mail: rafael\_cantuba@dlsu.edu.ph}

\date{}

\maketitle


\renewcommand{\thefootnote}{}

\footnote{2010 \emph{Mathematics Subject Classification}: Primary 17B60; Secondary 16S15, 47C99, 47L15.}

\footnote{\emph{Key words and phrases}: Lie polynomial, commutation relation, diamond lemma.}

\renewcommand{\thefootnote}{\arabic{footnote}}
\setcounter{footnote}{0}


\begin{abstract}
We present an elementary approach in characterizing Lie polynomials in the generators $A,B$ of an algebra with a defining relation that is in the form of a deformed or twisted commutation relation $AB=\sigma(BA)$ where the deformation or twisting map $\sigma$ is a linear polynomial with a slope parameter that is not a root of unity. The class of algebras defined as such encompasses $q$-deformed Heisenberg algebras, rotation algebras, and some types of $q$-oscillator algebras whose deformation parameters are not roots of unity, and so we have a general solution for the Lie polynomial characterization problem for these algebras.
\end{abstract}

\section{Introduction}
\def\introAlg{\mathfrak{A}}
This work is inspired by the classical commutation relations in quantum physics such as the canonical commutation relation for bosonic systems \cite{Gaa54b}, the canonical anticommutation relation for fermionic systems \cite{Gaa54a}, the deformed commutation relation for $q$-oscillators \cite{Bie89,Hel00,Hel05,Mac89}, and also the related commutation relations for rotation algebras \cite{Eff67,Rie81,Web13}. The study of these commutation relations has led to the development of elegant results under Algebra and Functional Analysis. See for instance, \cite[pp. 217--220]{Bra97}. An abstract setting for all commutation relations mentioned is to view them as specific manifestations of a defining relation of a unital associative algebra $\introAlg$ generated by two elements $A$, $B$. We can express this defining relation for $\introAlg$ in the form $AB=\sigma(BA)$ for some function $\sigma$ defined on the free unital associative algebra on $A,B$ into itself, and this function $\sigma$, the deformation or twisting map, can be considered as ``linear'' in the sense of a polynomial of degree one. We call the relation $AB=\sigma(BA)$ as a \emph{deformed} or  \emph{twisted commutation relation}.

A natural Lie algebra structure is induced in $\introAlg$ by the operation $\lbrack P,Q\rbrack :=PQ-QP$ defined for any $P,Q\in\introAlg$. We are interested in the Lie subalgebra of $\introAlg$ generated by the same generators $A,B$, that is, the set of all Lie polynomials in $A,B$. Our goal is to obtain a characterization for any nonzero $P\in\introAlg$ to be a Lie polynomial in $A,B$, or alternatively, to determine a linear complement of the Lie subalgebra of $\introAlg$ generated by $A,B$. We shall call the problem of finding such a characterization (or the corresponding direct sum decomposition) as the \emph{Lie polynomial characterization problem} for the algebra $\introAlg$ (with respect to the presentation in which $A,B$ are generators). As we shall see, nontrivial forms of $AB=\sigma(BA)$ are inexpressible in terms of Lie algebra operations only, which means that the results from the theory of bases of finitely-generated and finitely-presented Lie algebras is not directly applicable, and so other  methods have to be devised in order to solve the Lie polynomial characterization problem.

The solution to the Lie polynomial characterization problem for a free algebra has resulted into an elegant theory that involves Hopf algebras. See for instance \cite[Theorem 1.4]{Reu93} in which the maps used in the Lie polynomial characterizations are precursors to establishing a Hopf algebra structure in the same algebra \cite[Proposition 1.10]{Reu93}. For algebras that are not free, we are interested in algebras with defining relations that are not (all) Lie polynomials. The first appearance of such a Lie polynomial characterization problem is \cite[Problem 12.14]{Ter11}, which is about the universal Askey-Wilson algebra $\Delta$. Initial progress was made in \cite{Can15}. However, the Lie polynomial characterization problem for $\Delta$ remains open. For $q$-deformed Heisenberg algebras with $q$ not a root of unity, the Lie polynomial characterization problem is solved \cite{Can17}, and this current paper serves as an initial step in generalizing \cite{Can17}. Another recent result is a solution to the Lie polynomial characterization problem for a toral algebra (an algebra related to a torus) in which the Fairlie-Odesskii algebra is embedded \cite{Can18}. The Lie polynomial characterization in the said toral algebra was used to prove that the Casimir element of the Fairlie-Odesskii algebra is not a Lie polynomial if the deformation parameter is not a root of unity \cite[Theorem 5.3]{Can18}.

The basis theorem for the algebra in this paper, Lemma~\ref{basisLem}, is a generalization of \cite[Corollary 4.5]{Hel05}, which is for the specific case of a $q$-deformed Heisenberg algebra. The proof of \cite[Corollary 4.5]{Hel05} makes use of gradations, degree functions, chain conditions on ideals, and explicit reordering formulae that involve $q$-special combinatorics \cite[Appendix C]{Hel00}. Our proof for the generalization, Lemma~\ref{basisLem}, is elementary and makes use only of \emph{the Diamond Lemma for Ring Theory} \cite[Theorem 1.2]{Ber78}. Our main result, Theorem~\ref{mainThm}, is a generalization of \cite[Theorem 5.8]{Can17}, but our proof of the former is also elementary compared to the proof of the latter from \cite{Can17}, which relies on the theory of the \emph{Lyndon-Shirshov basis} of a free Lie algebra, which can be found in sources such as the seminal works \cite{Shi53,Shi58}, and the excellent exposition \cite[Section 2.8]{Ufn95}.

\section{Preliminaries}
Let $\F$ denote a fixed but arbitrary field. Throughout, an algebra is assumed to be unital and associative. Given an algebra $\mathcal{A}$ over $\F$ with unity element $\algI_\mathcal{A}$, we use the convention that $U^0=\algI_\mathcal{A}$ for any $U\in\mathcal{A}$, and we denote $\algI_\mathcal{A}$ simply by $\algI$ if no confusion shall arise. Any subalgebra is assumed to contain the unity element. Thus, if a subalgebra $\mathcal{S}$ of a nontrivial algebra $\mathcal{A}$ is generated by only one element $X\in\mathcal{A}\backslash\{0,\algI\}$, we simply write $\mathcal{S}$ as the polynomial algebra $\F[X]\subseteq\mathcal{A}$. We also note that every algebra $\mathcal{A}$ has a Lie algebra structure induced by $\lbrack U,V\rbrack:=UV-VU$ for all $U,V\in\mathcal{A}$. Throughout, whenever we refer to the Lie algebra structure on an algebra $\mathcal{A}$, we shall always mean that which is induced by the Lie bracket operation $\lbrack\cdot,\cdot\rbrack$ just mentioned. Let $X_1,X_2,\ldots,X_n\in\mathcal{A}$. If $\mathcal{K}$ is the Lie subalgebra of $\mathcal{A}$ generated by $X_1,X_2,\ldots,X_n$, then we call the elements of $\mathcal{K}$ the \emph{Lie polynomials in $X_1,X_2,\ldots,X_n$}. Given $U\in\mathcal{A}$, the linear map $\ad U:\mathcal{A}\rightarrow\mathcal{A}$ is defined by the rule $V\mapsto\lbrack U,V\rbrack$. Also, for any linear map $\varphi$ whose domain and codomain are equal, and for any nonnegative integer $n$, by $\varphi^n$ we mean composition of $\varphi$ with itself $n$ times, where $\varphi^0$ is interpreted as the identity linear map $\id$. We interpret the composition of an empty collection of linear maps also as $\id$.

Given a nonempty finite set $\mathcal{S}$, we denote by $\left<\mathcal{S}\right>$ the free monoid on $\mathcal{S}$. We call the elements of $\left<\mathcal{S}\right>$ the \emph{words} on $\mathcal{S}$. The multiplicative operation on $\left<\mathcal{S}\right>$ is the concatenation product, and we call the multiplicative identity in $\left<\mathcal{S}\right>$ the \emph{empty word}. By $\F\left<\mathcal{S}\right>$ we mean the free algebra generated by $\mathcal{S}$. The set $\left<\mathcal{S}\right>$ is a basis for the vector space $\F\left<\mathcal{S}\right>$, and the empty word in $\left<\mathcal{S}\right>$ is the unity element of the algebra $\F\left<\mathcal{S}\right>$. 

Consider a two-element generating set $\genset:=\{A,B\}$. Given a function  $\sigma:\F\monoid\rightarrow\F\monoid$, denote by $\idcomm_\sigma$ the (two-sided) ideal of $\F\monoid$ generated by $AB-\sigma(BA)$. We call the defining relation $AB=\sigma(BA)$ of the quotient algebra $\F\monoid/\idcomm_\sigma$ as \emph{the $\sigma$-twisted commutation relation (satisfied by $A,B$)}. If there exist $m,b\in\F$ with $m\neq 0$ such that $\sigma(X)=mX+b\algI$, we say that the commutation relation $AB=\sigma(BA)$ is \emph{linearly twisted (with parameters $m,b$)}. We may also refer to $m$ as the \emph{slope parameter} of the twisting map $\sigma$. We further exclude the case $m=1$, where $\F\monoid/\idcomm_\sigma$ becomes the \emph{Heisenberg-Weyl algebra} \cite[Definition 2.1]{Goo15} over $\F$. Algebraic aspects such as reordering and normal forms in the Heisenberg-Weyl algebra are excellently discussed in \cite[Section 1.2]{Hel02} and also in \cite{Bla03}. However, the Lie polynomial characterization problem in the Heisenberg-Weyl algebra has a trivial solution. The set of all Lie polynomials in $A,B$ is the three-dimensional Lie algebra with basis $A,B,\LieAB$, and the corresponding commutator table for this basis is completely determined by the relations $\lbrack A,\LieAB\rbrack=0=\lbrack\LieAB, B\rbrack$.

From this point onward, assume that $AB=\sigma(BA)$ is linearly twisted with parameters $m,b$. By a simple result from the theory of free Lie algebras \cite[Lemma 1.7]{Reu93}, the generator $AB-\sigma(BA)$ of $\idcomm_\sigma$ is not an element of the free Lie algebra on $\genset$. That is, the defining relation of $\F\monoid/\idcomm_\sigma$ cannot be expressed in terms of Lie algebra operations only. Thus, the existing elegant and powerful machinery for computing a basis for finitely-presented Lie algebras, such as those in \cite{Ger96,Ger97a,Ger97b}, are not directly applicable, but in this work we present an elementary approach to the Lie polynomial characterization problem in  $\F\monoid/\idcomm_\sigma$.

\section{A reduction system for $\F\monoid/\idcomm_\sigma$ in three generators}

 As will become apparent in the succeeding parts of this section, we have good reason to consider properties of $\F\monoid/\idcomm_\sigma$ that are related to an algebra with three generators. Let $\newgenset=\{A,B,C\}$, and denote by $\newidcomm_\sigma$ the  ideal of $\F\newmonoid$ generated by $AB-mBA-b\algI$ and $C-AB+BA$. An immediate consequence is that there exists an algebra isomorphism $\F\monoid/\idcomm_\sigma\rightarrow\F\newmonoid/\newidcomm_\sigma$ such that $A\mapsto A$, $B\mapsto B$, and $\LieAB\mapsto C$. 
 
 Our goal in this section is to determine a basis for $\newAlg$ using the Diamond Lemma for Ring Theory \cite[Theorem 1.2]{Ber78}. We establish a sufficient condition for some elements of $\newAlg$ to form a basis for $\newAlg$, by invoking the Diamond Lemma. To establish this sufficient condition, we recall terminology and notation from \cite[Section 1]{Ber78}. We introduce each of such notions as the need for it arises. As a first step, we give an alternative presentation of $\newAlg$ that will turn out to be more suited to the Diamond Lemma.

\begin{proposition}\label{presProp} The algebra $\F\newmonoid/\newidcomm_\sigma$ has a presentation with generators $A,B,C$ and relations
\begin{eqnarray}
AB & = &\frac{mC-b\algI}{m-1},\label{redrel1}\\
AC & = & mCA,\label{redrel2}\\
BA & = &\frac{C-b\algI}{m-1},\label{redrel3} \\
CB & = &  mBC,\label{redrel4}\\
BC^kA & = & \frac{C^{k+1}-bC^k}{m^k(m-1)},\quad\quad k\in\{1,2,\ldots\}.\label{redrel5}
\end{eqnarray}
\end{proposition}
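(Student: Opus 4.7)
The plan is to prove the claim by showing that the ideal $\newidcomm_\sigma$ of $\F\newmonoid$ coincides with the ideal $\mathcal{J}'$ generated by the five left-minus-right expressions obtained from (\ref{redrel1})--(\ref{redrel5}). The two defining generators of $\newidcomm_\sigma$, namely $AB - mBA - b\algI$ and $C - AB + BA$, form a pair of linear equations in the unknowns $AB$ and $BA$. Since the standing hypothesis $m \neq 1$ makes $m-1$ invertible in $\F$, solving these equations simultaneously yields exactly (\ref{redrel1}) and (\ref{redrel3}), and conversely the combinations $(\ref{redrel1})-(\ref{redrel3})$ and $(\ref{redrel1})-m\cdot(\ref{redrel3})$ recover the two original generators. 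Thus (\ref{redrel1}) and (\ref{redrel3}) alone already generate $\newidcomm_\sigma$, and what remains is to verify that (\ref{redrel2}), (\ref{redrel4}), and (\ref{redrel5}) are consequences of the original relations, so that the full system (\ref{redrel1})--(\ref{redrel5}) lies in $\newidcomm_\sigma$.

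For (\ref{redrel2}), I would expand $AC = A(AB - BA) = A^2 B - ABA$, rewrite $A^2 B = A(AB)$ and $ABA = (AB)A$ using $AB = mBA + b\algI$, and watch the $bA$ terms cancel to leave $AC = m(ABA - BA^2) = m(AB - BA)A = mCA$. Relation (\ref{redrel4}) follows by the symmetric computation applied to $CB = (AB - BA)B$. For the family (\ref{redrel5}), I would first establish the iterated commutation $C^k A = m^{-k} A C^k$ by induction on $k$ using (\ref{redrel2}), and then apply (\ref{redrel3}) to the factor $BA$ arising from $BC^k A = m^{-k}(BA)C^k$, which produces the required closed form.

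No genuine obstacle arises: the derivations are purely linear manipulations, and every division by $m-1$ or by $m^k$ is permitted because the standing hypothesis $m \in \F \setminus \{0,1\}$ makes both invertible. The only mildly delicate part is writing the induction for (\ref{redrel5}) cleanly, but each inductive step is a single application of $AC = mCA$, so the argument reduces to routine bookkeeping once the reduction to showing (\ref{redrel2})--(\ref{redrel5}) hold in $\F\newmonoid/\newidcomm_\sigma$ has been made.
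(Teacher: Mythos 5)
Your proposal is correct and follows essentially the same route as the paper: equality of $\newidcomm_\sigma$ with the ideal generated by the five left-minus-right elements of \eqref{redrel1}--\eqref{redrel5} is established by mutual containment, where the invertible linear change of generators between $\{AB-mBA-b\algI,\ C-AB+BA\}$ and the elements coming from \eqref{redrel1}, \eqref{redrel3} (valid since $m\neq 1$) settles both easy inclusions, and \eqref{redrel2}, \eqref{redrel4}, \eqref{redrel5} are then checked to be consequences of the defining relations. The only difference is presentational: the paper records explicit identities in the free algebra $\F\newmonoid$ (e.g.\ a closed-form expression of the element behind \eqref{redrel5} in terms of those behind \eqref{redrel3} and \eqref{redrel4}), whereas you derive the same relations by computing in the quotient with an induction on $k$ --- logically equivalent witnesses of ideal membership, using only the standing hypotheses $m\neq 0,1$.
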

\def\oldgen{\zeta}
\def\newgen{\xi}
\begin{proof} View the left-hand side $AB$ and the right-hand side $\frac{mC-b\algI}{m-1}$ of \eqref{redrel1} as elements of the free algebra $\F\newmonoid$, and define $\newgen_1\in\F\newmonoid$ as the difference $AB-\frac{mC-b\algI}{m-1}$. In a similar manner, define $\newgen_2,\  \newgen_3,\  \newgen_4\in\F\newmonoid$ for the relations \eqref{redrel2} to \eqref{redrel4}, respectively. As for the countably infinite relations \eqref{redrel5}, for each positive integer $k$, we define $\xi_5(k)\in\F\newmonoid$ to be the left-hand side of \eqref{redrel5} minus the right-hand side. Also, denote the generators of $\newidcomm_\sigma$ by $\oldgen_1:=AB-mBA-b\algI$ and $\oldgen_2:=C-AB+BA$. Let $\mathcal{K}$ be the ideal of $\F\newmonoid$ generated by 
\begin{eqnarray}
\{\newgen_1,\newgen_2,\newgen_3,\newgen_4\}\cup\{\newgen_5(k)\  :\  k\in\{1,2,\ldots\}\}.
\end{eqnarray}
We claim that $\newidcomm_\sigma=\mathcal{K}$. By routine computations, it can be shown that the relations
\begin{eqnarray}
\oldgen_1 & = & \newgen_1-m\newgen_2,\label{eqId1}\\
\oldgen_2 & = &-\newgen_1+\newgen_2,\label{eqId2}\\
\newgen_1 & = &\frac{\oldgen_1+m\oldgen_2}{1-m},\label{eqId3}\\
\newgen_2 & = &\lbrack A,\oldgen_1\rbrack+A\oldgen_1-m\oldgen_2A,\label{eqId4}\\
\newgen_3 & = &\frac{\oldgen_1+\oldgen_2}{1-m},\label{eqId5}\\
\newgen_4 & = &\lbrack \oldgen_1,B\rbrack+\oldgen_2B-mB\oldgen_2,\label{eqId6}\\
\newgen_5(k) & = & m^{-k}\newgen_3-\sum_{i=1}^km^{i-1-k}C^{k-i}\newgen_4C^{i-1}A,\   (k\in\{1,2,\ldots\}),\label{eqId7}
\end{eqnarray}
hold in $\F\newmonoid$. We see from \eqref{eqId1} and \eqref{eqId2} that the generators $\oldgen_1$, $\oldgen_2$ of $\newidcomm_\sigma$ are elements of $\mathcal{K}$, and so $\newidcomm_\sigma\subseteq\mathcal{K}$. By \eqref{eqId3} to \eqref{eqId6}, the generators $\newgen_1,\newgen_2,\newgen_3,\newgen_4$ of $\mathcal{K}$ are in $\newidcomm_\sigma$. Then by \eqref{eqId7}, all the other generators $\newgen_5(k)$ for all $k$ are also elements of $\newidcomm_\sigma$. Then $\mathcal{K}$ is contained in $\newidcomm_\sigma$, and hence $\mathcal{K}=\newidcomm_\sigma$. Therefore, $\F\newmonoid/\newidcomm_\sigma=\F\newmonoid/\mathcal{K}$.
\end{proof}
We use the presentation of $\F\newmonoid/\newidcomm_\sigma$ given in Proposition~\ref{presProp} to construct a \emph{reduction system in $\F\newmonoid$} (i.e., a subset of $\newmonoid\times\F\newmonoid$). Let
\begin{eqnarray}
\alpha & := & \left(AB,\frac{mC-b\algI}{m-1}\right),\\
\beta & := & \left(AC,mCA\right),\\
\gamma & := & \left(BA,\frac{C-b\algI}{m-1}\right),\\
\delta & := & \left(CB,mBC\right),\\
\varepsilon(k) & := & \left(BC^kA , \frac{C^{k+1}-bC^k}{m^k(m-1)}\right),\quad k\in\{1,2,\ldots\}.
\end{eqnarray}
Then $\redsys:=\{\alpha,\beta,\gamma,\delta\}\cup\{\varepsilon(k)\  :\  k\in\{1,2,\ldots\}\}$ is a reduction system in $\F\newmonoid$ for the presentation of $\F\newmonoid/\newidcomm_\sigma$ given in Proposition~\ref{presProp}. We use Bergman's notation for elements of a reduction system, which is as follows. Given an element $\mu$ of a reduction system, we denote the word that serves as the first coordinate of $\mu$ as $W_\mu$ and the second coordinate, a linear combination of words, by $f_\mu$. In the reduction system $\redsys$ for example, we have $W_{\varepsilon(k+1)}=BC^{k+1}A$ and $f_\beta=mCA$.

If $S$ is a reduction system, a word $W\in\newmonoid$ is said to be \emph{$S$-irreducible} whenever $W_\mu$ is not a subword of $W$ for any $\mu\in S$. It is routine to show that a word $W\in\newmonoid$ is $\redsys$-irreducible if and only if 
\begin{eqnarray}
W=C^kA^l,\mbox{ or }\     W=B^lC^k,\label{iffirred}
\end{eqnarray}
for some nonnegative integers $k,l$. 

We now recall the notion of ambiguities. Suppose $S$ is a reduction system in $\F\newmonoid$. An \emph{overlap ambiguity} is a $5$-tuple
\begin{eqnarray}
\overlap=(\mu,\nu,L,X,R)\in S^2\times\newmonoid^3\nonumber
\end{eqnarray}
such that
\begin{eqnarray}
W_\mu = LX,\quad W_\nu = XR.\label{overlapdef}
\end{eqnarray}
If instead of \eqref{overlapdef}, we have $W\mu=X$ and $W_\nu = LXR$, then $\overlap$ is called an \emph{inclusion ambiguity}. The term \emph{ambiguity} means either an overlap ambiguity or an inclusion ambiguity.

It is routine to show that the reduction system $\redsys$ has no inclusion ambiguities, and we describe the inclusion ambiguities in the following. All overlap ambiguities that do not involve an element of $\redsys$ of the form $\varepsilon(k)$ are the following:
\begin{eqnarray}
\overlap_1 & := & \left(\alpha,\gamma,A,B,A\right),\nonumber\\
\overlap_2 & := & \left(\beta,\delta,A,C,B\right),\nonumber\\
\overlap_3 & := & \left(\gamma,\alpha,B,A,B\right),\nonumber\\
\overlap_4 & := & \left(\gamma,\beta,B,A,C\right),\nonumber\\
\overlap_5 & := & \left(\delta,\gamma,C,B,A\right),\nonumber
\end{eqnarray}
while all the overlap ambiguities that depend on an integer parameter are:
\begin{eqnarray}
\overlap_6(k) & := & \left(\alpha,\varepsilon(k),A,B,C^kA\right),\nonumber\\
\overlap_7(k) & := & \left(\delta,\varepsilon(k),C,B,C^kA\right),\nonumber\\
\overlap_8(k) & := & \left(\varepsilon(k),\alpha,BC^k,A,B\right),\nonumber\\
\overlap_9(k) & := & \left(\varepsilon(k),\beta,BC^k,A,C\right).\nonumber
\end{eqnarray}

If $S$ is a reduction system on $\F\newmonoid$, and if $L,R\in\newmonoid$ and $\mu\in S$, then by a \emph{reduction} we mean a linear map $r_{L\mu R}:\F\monoid\rightarrow\F\monoid$ that fixes every element of the basis $\newmonoid$ of $\F\newmonoid$ except $LW_\mu R$, for which the rule $LW_\mu R\mapsto Lf_\mu R$ applies. For simpler notation, we write $r_{\algI\mu R}$, $r_{L\mu\algI}$, $r_{\algI\mu\algI}$ as $r_{\mu R}$, $r_{L\mu}$, $r_{\mu}$, respectively. An overlap ambiguity $\overlap=(\mu,\nu,L,X,R)$ is \emph{resolvable} if there exist compositions $\lambda,\rho$ of reductions such that
\begin{eqnarray}
\lambda(f_\mu R)=\rho(Lf_\nu).\label{overlapresolve}
\end{eqnarray}
We have an analogous definition for the \emph{resolvability of an inclusion ambiguity} in which \eqref{overlapresolve} is replaced by $\lambda(Lf_\mu R)=\rho(f_\nu)$.

For any positive integer $k$, the compositions
\begin{eqnarray}
a_k & := & r_{C^{k-1}\beta}\circ r_{C^{k-2}\beta C}\circ r_{C^{k-2}\beta C^2}\circ\cdots\circ r_{C\beta C^{k-2}}\circ r_{\beta C^{k-1}},\nonumber\\
b_k & := & r_{C^{k-1}\delta}\circ r_{C^{k-2}\delta C}\circ r_{C^{k-2}\delta C^2}\circ\cdots\circ r_{C\delta C^{k-2}}\circ r_{\delta C^{k-1}},\nonumber
\end{eqnarray}
of reductions for the reduction system $\redsys$ shall be of importance in the succeeding lemmas in this section. We note that if $k=1$, we retain the subscript in $b_k=b_1$ so as not to cause confusion with the parameter $b$ of the linearly twisted commutation relation $AB=\sigma(BA)$. It is routine to show that for any positive integers $h,k,l$, the linear maps $a_k$ and $b_k$ satisfy the properties
\begin{eqnarray}
a_k(AC^l) & = & \left\{
    \begin{array}{ll}
      m^kC^kA, & l=k, \\
      AC^l, & l\neq k,
    \end{array}\right.\label{akbkprop1}\\
b_k(C^lB) & = & \left\{
    \begin{array}{ll}
      m^kBC^k, & l=k, \\
      C^lB, & l\neq k.
    \end{array}\right.\label{akbkprop2}\\
a_k(C^hA) & = & C^hA,\label{akbkprop3}\\
b_k(BC^h) & = & BC^h.\label{akbkprop4}
\end{eqnarray}

\begin{lemma}\label{ambigLem} All ambiguities of $\redsys$ are resolvable.
\end{lemma}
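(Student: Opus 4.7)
The plan is to treat each of the nine families of overlap ambiguities separately and, for each, exhibit compositions $\lambda,\rho$ of reductions with $\lambda(f_\mu R)=\rho(Lf_\nu)$. Since $\redsys$ has no inclusion ambiguities, this suffices to establish the lemma.

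For the five parameter-free ambiguities $\overlap_1,\ldots,\overlap_5$, resolution is a short direct computation using only $r_\alpha$, $r_\beta$, $r_\gamma$, $r_\delta$, $r_{\varepsilon(1)}$. For example, in $\overlap_1=(\alpha,\gamma,A,B,A)$ with associated word $ABA$, one branch reduces via $r_\alpha$ to $(mCA-bA)/(m-1)$, while the other reduces via $r_\gamma$ to $(AC-bA)/(m-1)$ and then via $r_\beta$ to the same expression. The cases $\overlap_2$ and $\overlap_3$ are analogous, combining two of $r_\alpha,r_\beta,r_\gamma,r_\delta$ on each branch, while for $\overlap_4$ (word $BAC$) and $\overlap_5$ (word $CBA$) one branch terminates immediately and the other produces the intermediate word $mBCA$, which the single reduction $r_{\varepsilon(1)}$ then sends to the common value $(C^2-bC)/(m-1)$.

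For the four parameter-dependent families $\overlap_6(k),\ldots,\overlap_9(k)$, the key observation is that after the initial pair of reductions the discrepancies between the two branches live entirely in words of the form $AC^l$ (versus $C^lA$) or $C^lB$ (versus $BC^l$) for $l\in\{k,k+1\}$. These are precisely the words that the preceding compositions $a_k,a_{k+1}$ and $b_k,b_{k+1}$ are designed to reduce, via the selection properties \eqref{akbkprop1}--\eqref{akbkprop4}. Concretely, I would resolve $\overlap_6(k)$ (word $ABC^kA$) by applying $r_\alpha$ on one branch and $r_{\varepsilon(k)}$ on the other, then finish the latter by applying $a_{k+1}$ followed by $a_k$ to normalize both $AC^{k+1}$ and $AC^k$; the symmetric choice $b_{k+1}$ followed by $b_k$ disposes of $\overlap_8(k)$ (word $BC^kAB$). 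For $\overlap_7(k)$ (word $CBC^kA$) and $\overlap_9(k)$ (word $BC^kAC$), one branch leads directly to the common value while the other requires a single application of $r_{\varepsilon(k+1)}$ at the end.

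The main obstacle is the combinatorial bookkeeping in the parameter-dependent cases: one must ensure that when $a_{k+1}$ is applied to an expression containing both $AC^{k+1}$ and $AC^k$, only the former is altered, and that the subsequent application of $a_k$ leaves the already-normalized $C^{k+1}A$ intact. These invariances are exactly what \eqref{akbkprop3} and \eqref{akbkprop4} provide. Once they are exploited, each resolution reduces to an elementary arithmetic identity over $\F$ with denominator $m^k(m-1)$, which is nonzero by the standing hypothesis $m\neq 1$.
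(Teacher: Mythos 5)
Your proposal is correct and follows essentially the same route as the paper: the same choice of reductions branch by branch (including $\lambda=\id$, $\rho=r_\beta$ for $\overlap_1$, the single $r_{\varepsilon(1)}$ or $r_{\varepsilon(k+1)}$ steps for $\overlap_4,\overlap_5,\overlap_7(k),\overlap_9(k)$, and $a_k\circ a_{k+1}$, $b_k\circ b_{k+1}$ for $\overlap_6(k)$, $\overlap_8(k)$), with the invariance properties \eqref{akbkprop1}--\eqref{akbkprop4} playing exactly the role the paper assigns them.
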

\begin{proof} Consider the overlap ambiguity $\overlap_1$. To show that $\overlap_1$ is resolvable, we exhibit compositions $\lambda_1$, $\rho_1$ such that 
\begin{eqnarray}
\lambda_1(f_\alpha A)=\rho_1(Af_\gamma).\label{reductionEq}
\end{eqnarray} We claim that $\lambda_1$ is the empty composition $\id$ and that $\rho_1=r_\beta$. Since $r_\beta$ is a linear map,
\begin{eqnarray}
r_\beta(Af_\gamma)=\frac{r_\beta(AC)-br_\beta(A)}{m-1}.\label{resolve}
\end{eqnarray}
The key step is to examine the words in the right-hand side of \eqref{resolve} and determine if one of them is related to the element of the reduction system $\Gamma$ that appears in the subscript of the reduction involved. We see that $AC=W_\beta$, and so the right-hand side of \eqref{resolve} is a linear combination of $r_\beta(W_\beta)$ and $r_\beta(A)$. But since $AC$ and $A$ are linearly independent in $\F\newmonoid$, we have $A\neq CA = W_\beta$, which means that $r_\beta$ fixes $A$, and by the definition of a reduction, we have $r_\beta(AC)=r_\beta(W_\beta)=f_\beta$. By these observations, \eqref{resolve} becomes
\begin{eqnarray}
r_\beta(Af_\gamma)=\frac{f_\beta-bA}{m-1}=\frac{mCA-bA}{m-1}=f_\alpha A=\id (f_\alpha A),\nonumber
\end{eqnarray}
which implies that we have determined the solution $\lambda_1=\id$, $\rho_1=r_\beta$ to \eqref{reductionEq}, as claimed. The ambiguities $\overlap_2,\ldots,\overlap_5,\overlap_6(k),\ldots,\overlap_9(k)$ can be shown to be resolvable by similar routine computations and arguments. We have an analogue of the equation  \eqref{reductionEq} for each $i\in\{2,3,\ldots,9\}$. We summarize the solutions in the following table.
\begin{center}
\begin{tabular}{|l|c|c|}
\hline
$i$ & $\lambda_i$  & $\rho_i$  \\
 \hline
1 & $\id$ & $r_\beta$ \\
 \hline
2 & $r_{C\alpha}$ & $r_{\alpha C}$ \\
 \hline
3 & $r_{\delta}$  & $\id$ \\
 \hline
4 & $\id$  & $r_{\varepsilon(1)}$ \\
 \hline
5 & $r_{\varepsilon(1)}$ & $\id$ \\
 \hline
6 & $\id$ & $a_k\circ a_{k+1}$ \\
 \hline
7 & $r_{\varepsilon(k+1)}$ & $\id$ \\
 \hline
8 & $b_k\circ b_{k+1}$ & $\id$ \\
 \hline
9 & $\id$ & $r_{\varepsilon(k+1)}$ \\
 \hline
\end{tabular}
\end{center}
More precisely, for each $i\in\{1,2,3,\ldots,9\}$, write the overlap ambiguity as either  $\overlap_i=(\mu,\nu,L,X,R)$ if $i<6$, or $\overlap_i(k)=(\mu,\nu,L,X,R)$ if $i\geq 6$ in which case one of $\mu$, $\nu$ depends on $k$. Then the above table shows the compositions $\lambda_i$, $\rho_i$ of reductions such that $\lambda_i(f_\mu R)=\rho_i(Lf_\nu)$, and this can be verified to be true for each $i\geq 2$ by routine computations. We only note here that in the computations for the case $i\in\{6,7,8,9\}$, the relations \eqref{akbkprop1} to \eqref{akbkprop4} are of significance. By the routine computations just described, all overlap ambiguities of $\redsys$ are resolvable, and since there is no inclusion ambiguity, the proof is now complete.
\end{proof}

\begin{lemma}\label{basisLem} The vectors
\begin{eqnarray}
C^kA^l,\    B^lC^k,\     C^k\quad\in\quad\newAlg, & & (k\in\{0,1,2,\ldots\},\label{theBasis}\\
& & \  \   l\in\{1,2,\ldots\},)\nonumber
\end{eqnarray}
form a basis for $\newAlg$.
\end{lemma}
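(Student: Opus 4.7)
The plan is to apply the Diamond Lemma for Ring Theory \cite[Theorem 1.2]{Ber78} to the reduction system $\redsys$ constructed above. To invoke it, I need a semigroup partial ordering $\leq$ on $\newmonoid$ that satisfies the descending chain condition and is compatible with $\redsys$ in the sense that, for each $\mu\in\redsys$, every monomial appearing in $f_\mu$ is strictly less than $W_\mu$.

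For the ordering I would take the length-lexicographic order on $\newmonoid$ associated with the alphabetical order $A>C>B$. This is a well-ordering on the free monoid on three letters, so DCC is automatic, and it is standard that the length-lex order on a free monoid is a semigroup partial ordering. The substantive check is compatibility with each of the countably many reductions in $\redsys$. For $\alpha$, $\gamma$, and $\varepsilon(k)$ the leading word is sent to a linear combination of strictly shorter words, so compatibility follows from the length comparison alone. The length-preserving reductions $\beta$ and $\delta$ require the lexicographic tiebreaker: the first letters give $AC>CA$ because $A>C$, and $CB>BC$ because $C>B$, so in each case the unique word on the right-hand side is strictly smaller than the leading word.

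With compatibility in hand, Lemma~\ref{ambigLem} supplies the remaining hypothesis of the Diamond Lemma, namely the resolvability of all ambiguities of $\redsys$. The conclusion is that the $\redsys$-irreducible words form a basis of $\newAlg$. By the characterization of irreducible words already noted in \eqref{iffirred}, these are precisely the words $C^{k}A^{l}$ and $B^{l}C^{k}$ with $k,l\in\N$. The two families coincide exactly when $l=0$, where both expressions reduce to $C^{k}$; separating this common case and restricting to $l\geq 1$ in the other two families gives the three families listed in \eqref{theBasis} with no repetitions, as required.

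I do not anticipate a substantive obstacle. The only point requiring a small amount of forethought is the choice of alphabetical order on $\{A,B,C\}$: a naive length-only order is insufficient because $\beta$ and $\delta$ preserve length, and one must verify that a single linear order on the alphabet simultaneously makes both $AC>CA$ and $CB>BC$ lex-compatible. Any order placing $A$ above $C$ and $C$ above $B$ works, and $A>C>B$ is the cleanest such choice.
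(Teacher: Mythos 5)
Your proposal is correct and follows essentially the same route as the paper: invoke Bergman's Diamond Lemma, using the resolvability of all ambiguities of $\redsys$ (Lemma~\ref{ambigLem}) together with the description \eqref{iffirred} of the $\redsys$-irreducible words, and then remove the duplication of the two families at $l=0$. The only difference is that you explicitly verify the ordering hypothesis of the Diamond Lemma (the degree-lexicographic order with $A>C>B$ is a semigroup well-order compatible with $\redsys$), a point the paper's proof leaves implicit.
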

\begin{proof} Let $\Psi:\F\newmonoid\rightarrow\newAlg$ be the canonical map. Observe that the images of all $\redsys$-irreducible words under $\Psi$ are precisely the vectors \eqref{theBasis}. The only difference between the conditions on the exponents $k,l$ noticeable in comparing \eqref{iffirred} and \eqref{theBasis} is that we avoid the duplication $C^kA^l=\algI=B^lC^k$ when $l=0=k$. Enough has been established in this section for us to invoke the Diamond Lemma \cite[Theorem 1.2]{Ber78}. The only implication we need from \cite[Theorem 1.2]{Ber78} is that: if all the ambiguities of a reduction system $S$ are resolvable and if $\mathcal{K}$ is the ideal of $\F\newmonoid$ generated by all  $W_\mu-f_\mu$ ($\mu\in S$), then the images of all the $S$-irreducible words under the canonical map $\F\newmonoid\rightarrow\F\newmonoid/\mathcal{K}$ form a basis for $\F\newmonoid/\mathcal{K}$. Thus if $S=\redsys$ and $\mathcal{K}=\newidcomm_\sigma$, then by Lemma~\ref{ambigLem}, the vectors \eqref{theBasis} form a basis for $\F\newmonoid/\newidcomm_\sigma$.
\end{proof}

\section{The Lie subalgebra of $\theAlg$ generated by $A,B$}
All computations in this section are in the quotient algebra $\theAlg$. We use the basis \eqref{theBasis} of $\F\newmonoid/\newidcomm_\sigma=\theAlg$ in order to distinguish which elements of $\theAlg$ are Lie polynomials in $A,B$, and which are not. First, we give generalized forms of the relations \eqref{redrel2} and \eqref{redrel4}.
\begin{proposition} For any positive integers $k,l$, the relations
\begin{eqnarray}
A^lC^k & = & m^{kl}C^kA^l,\label{reorderAC}\\
C^kB^l & = &  m^{kl}B^lC^k,\label{reorderBC}
\end{eqnarray}
hold in $\theAlg$.
\end{proposition}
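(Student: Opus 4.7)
The proposition is essentially an exponent-scaled version of the two rules $AC=mCA$ and $CB=mBC$ supplied by \eqref{redrel2} and \eqref{redrel4}, so the plan is a standard double induction. I would handle \eqref{reorderAC} in two stages, and then observe that \eqref{reorderBC} follows by a symmetric argument.

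For \eqref{reorderAC} the first stage fixes the exponent $k=1$ and proves $A^lC=m^lCA^l$ by induction on $l$. The base case $l=1$ is precisely \eqref{redrel2}. For the inductive step, assuming $A^lC=m^lCA^l$, I would write $A^{l+1}C=A(A^lC)=m^l A(CA^l)=m^l(AC)A^l$, and a single application of \eqref{redrel2} converts $AC$ to $mCA$, giving $m^{l+1}CA^{l+1}$. The second stage fixes $l$ arbitrary and proceeds by induction on $k$. The base case $k=1$ is the identity just established. Assuming $A^lC^k=m^{kl}C^kA^l$, the inductive step $A^lC^{k+1}=(A^lC^k)C=m^{kl}C^k(A^lC)=m^{kl}C^k\cdot m^l CA^l=m^{(k+1)l}C^{k+1}A^l$ is immediate, where the middle equality applies the previously established $A^lC=m^lCA^l$.

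The relation \eqref{reorderBC} is proved by the mirror-image argument starting from \eqref{redrel4}: first show $CB^l=m^lB^lC$ by induction on $l$, then $C^kB^l=m^{kl}B^lC^k$ by induction on $k$. I do not anticipate any obstacle here; the computations are purely formal manipulations inside $\theAlg$ and require no further appeal to the basis theorem or the reduction system from the previous section. There is no interaction with the more delicate relation \eqref{redrel5}, because the words being reordered only involve the two generators $A,C$ (respectively $B,C$), and the two rules $AC=mCA$ and $CB=mBC$ suffice on their own to push all $A$'s to the right of $C$'s (respectively all $B$'s to the left of $C$'s), each swap contributing a factor of $m$.
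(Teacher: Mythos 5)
Your proposal is correct and matches the paper's approach: the paper's proof is simply ``Use induction on $k,l$,'' and your double induction starting from \eqref{redrel2} and \eqref{redrel4} is exactly the intended argument, just written out in full.
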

\begin{proof} Use induction on $k,l$.
\end{proof}

\begin{example}\label{tableEx1} The relations \eqref{reorderAC} and \eqref{reorderBC} are reordering formulae for two types of products in $\theAlg$. One is a product of a power of $A$ and a power of $C$ (with positive exponents), and the second type is a product of a power of $C$ and a power of $B$ (also with positive exponents). The importance of these reordering formulae is that given some important expressions like those shown in the left-hand sides of the equations below, we can rewrite such expressions in terms of the basis \eqref{theBasis} of $\theAlg$. That is, by using \eqref{reorderAC}, \eqref{reorderBC}, it is routine to show that the relations
\begin{eqnarray}
\lbrack A,C^x\rbrack & = & (m^{x}-1)C^{x}A,\label{table1}\\
\lbrack A,C^xA^y\rbrack & = & (m^{x}-1)C^{x}A^{y+1},\label{table2}\\
\lbrack C^k,C^xA^y\rbrack & = & (1-m^{ky})C^{k+x}A^{y},\label{table3}\\
\lbrack C^kA^l,C^xA^y\rbrack & = & (m^{lx}-m^{ky})C^{k+x}A^{l+y},\label{table4}\\
\lbrack B,C^x\rbrack & = & (1-m^{x})BC^{x},\label{table5}\\
\lbrack B,B^yC^x\rbrack & = & (1-m^{x})B^{y+1}C^{x},\label{table6}\\
\lbrack C^k,B^yC^x\rbrack & = & (m^{xy}-1)B^{y}C^{k+x},\label{table7}\\
\lbrack B^lC^k,B^yC^x\rbrack & = & (m^{ky}-m^{lx})B^{l+y}C^{k+x}.\label{table8}
\end{eqnarray}
hold in $\theAlg$ for any positive integers $k,l,x,y$. The relations \eqref{table1} to \eqref{table8} shall be important in the proof of our main result.
\end{example}

\begin{example}\label{tableEx2} Reordering in $\theAlg$ will turn out to be more complicated if all of $A,B,C$ appear in an expression that is to be reordered. More of the relations in the presentation of $\theAlg$ in Proposition~\ref{presProp}, will have to be involved, and not just \eqref{redrel2} and \eqref{redrel4} which have been generalized into \eqref{reorderAC} and \eqref{reorderBC}. As an example, by routine computations involving \eqref{redrel1}, \eqref{redrel3}, \eqref{reorderAC}, \eqref{reorderBC}, it can be shown that the relations
\begin{eqnarray}
\lbrack A, B^{y}C^{x}\rbrack & = & \frac{m^y-m^{-x}}{m-1}B^{y-1}C^{x+1} + \frac{m^{-x}-m}{m-1}bB^{y-1}C^{x} ,\label{tablei}\\
\lbrack B, C^{x}A^{y}\rbrack & = & \frac{m^{-x}-m^y}{m-1}C^{x+1}A^{y-1} + \frac{1-m^{-x}}{m-1}bC^{x}A^{y-1},\label{tableii}
\end{eqnarray}
hold in $\theAlg$ for any positive integers $x,y$. The relations \eqref{tablei} and \eqref{tableii} shall also be of significance in the proof of our main result.
\end{example}

The partial descriptions of reordering in $\theAlg$ described in Examples~\ref{tableEx1} and \ref{tableEx2} are sufficient for our purpose of solving the Lie polynomial characterization problem for the algebra $\theAlg$. We now proceed with some technical lemmas.

\begin{lemma}\label{equalexpLem} For any positive integer $n$, there exist $f,g\in\Csub\subseteq\theAlg$ such that
\begin{eqnarray}
(m-1)^nA^nB^n & = & (-1)^nb^n\algI + Cf,\label{divAB}\\
(m-1)^nB^nA^n & = & (-1)^nb^n\algI + Cg.\label{divBA}
\end{eqnarray}
\end{lemma}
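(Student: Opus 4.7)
The plan is to induct on $n$. The base case $n=1$ is immediate from the defining relations \eqref{redrel1} and \eqref{redrel3}: rewriting these gives $(m-1)AB = -b\algI + mC$ and $(m-1)BA = -b\algI + C$, which are precisely the required forms with $f = m\algI$ and $g = \algI$, and with $(-1)^1 b^1 = -b$.

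For the inductive step, I would start from
\begin{eqnarray}
(m-1)^{n+1}A^{n+1}B^{n+1} = (m-1)\,A\,\lbrack (m-1)^nA^nB^n\rbrack\, B\nonumber
\end{eqnarray}
and substitute the inductive hypothesis $(m-1)^nA^nB^n = (-1)^nb^n\algI + Cf$ with $f\in\Csub$. This produces two summands: $(-1)^nb^n(m-1)AB$, which by the $n=1$ case is $(-1)^nb^n(-b\algI + mC)$, and $(m-1)ACfB$. The first already contributes the required $(-1)^{n+1}b^{n+1}\algI$ plus a term in $C\Csub$. For the second, the key algebraic fact is that the subalgebra $\Csub$ is preserved under the twisting $C\mapsto mC$: by \eqref{reorderAC}, for every polynomial $h\in\Csub$ we have $A\,h(C) = h(mC)\,A$, so that $ACf = h(mC)\,A$ where $h(C) := Cf(C)\in C\Csub$. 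Multiplying by $B$ on the right and applying the base case once more, $(m-1)ACfB = h(mC)(m-1)AB = h(mC)(-b\algI + mC)$, which lies entirely in $C\Csub$ because $h(mC)$ itself is divisible by $C$. Collecting the terms yields the desired formula for $n+1$ with a new $f\in\Csub$.

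The argument for \eqref{divBA} is strictly parallel: one uses \eqref{reorderBC} instead of \eqref{reorderAC}, which gives $h(C)\,B = B\,h(mC)$, together with the $n=1$ form $(m-1)BA = -b\algI + C$. The inductive step then transforms $(m-1)^{n+1}B^{n+1}A^{n+1} = (m-1)B\,\lbrack (m-1)^nB^nA^n\rbrack A$ in the same way, with all manipulations taking place inside $\Csub$ plus a single leading $A$ or $B$.

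There is no essential obstacle here; the only point worth watching is the bookkeeping that each intermediate polynomial stays in $\Csub$ and that the ``leading scalar'' $(-1)^nb^n$ is correctly multiplied by $-b$ at each step. The whole proof consists of invoking the reordering identities \eqref{reorderAC}, \eqref{reorderBC} to shift the outer $A$ (resp.\ $B$) past a polynomial in $C$, then applying the base case to collapse the remaining $AB$ (resp.\ $BA$) factor, and verifying that the resulting extra term is divisible by $C$ on the left.
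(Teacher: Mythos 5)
Your argument is correct and follows essentially the same route as the paper: induction on $n$, sandwiching the inductive hypothesis between $A$ and $B$ (resp.\ $B$ and $A$), shifting the outer generator past polynomials in $C$ via \eqref{reorderAC}, \eqref{reorderBC}, and collapsing the remaining $AB$ or $BA$ factor with \eqref{redrel1}, \eqref{redrel3}. The only difference is that the paper establishes the stronger explicit factorizations $(m-1)^nA^nB^n=\prod_{i=1}^{n}(m^{i}C-b\algI)$ and $(m-1)^nB^nA^n=\prod_{i=0}^{n-1}(m^{-i}C-b\algI)$ and then reads off the constant term, whereas you track only the constant term $(-1)^nb^n$ and divisibility by $C$, which indeed suffices.
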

\begin{proof} We claim that 
\begin{eqnarray}
(m-1)^nA^nB^n & = & \prod_{i=1}^{n}(m^{i}C-b\algI),\label{equalAB}\\
(m-1)^nB^nA^n & = & \prod_{i=0}^{n-1}(m^{-i}C-b\algI).\label{equalBA}
\end{eqnarray}
We use induction on $n$. If $n=1$, then \eqref{equalAB}, \eqref{equalBA} are immediate consequences of \eqref{redrel1}, \eqref{redrel3}, respectively. Suppose that \eqref{equalAB}, \eqref{equalBA} hold for some positive integer $n$. Multiply both sides of \eqref{equalAB} by $A$ from the left and $B$ from the right to obtain
\begin{eqnarray}
(m-1)^nA^{n+1}B^{n+1} & = & \prod_{i=1}^{n}(m^{i}ACB-bAB).\label{inductAB}
\end{eqnarray}
Use \eqref{redrel2} to replace $AC$ in every occurence of $ACB$ in the right-hand side of \eqref{inductAB}. The result is $h\cdot AB$ where $h$ is a polynomial in $C$. We then use \eqref{redrel1} to replace $AB$ by a polynomial in $C$. After some adjustments in the scalar coefficients, we find that \eqref{equalAB} holds for $n+1$. By similar computations, \eqref{equalBA} can also be shown to hold for $n+1$, and this proves the claim. It is routine to show that the coefficient of $I$ in the right-hand side of \eqref{equalAB} is $(-1)^nb^n$. Thus, every other term in the right hand side of \eqref{equalAB} is a scalar multiple of a power of $C$ with exponent at least 1. By these observations, the right-hand side of \eqref{equalAB} is equal to $(-1)^nb^n\algI + Cf$ for some polynomial $f$ in $C$. This proves \eqref{divAB}. The relation \eqref{divBA} can be proven similarly.
\end{proof}

\begin{lemma}\label{tableLem} If $u,v,x,y$ are positive integers, then $\lbrack C^uA^v,B^yC^x\rbrack$ is an element of the span of
\begin{eqnarray}
C^k,\  C^kA^l,\  B^lC^k,\quad\quad (k,l\in\{1,2,\ldots\}).\label{derLieBasis}
\end{eqnarray}
\end{lemma}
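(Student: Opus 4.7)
The plan is to expand the bracket as
\[
\lbrack C^uA^v,B^yC^x\rbrack\;=\;C^uA^vB^yC^x\,-\,B^yC^{u+x}A^v,
\]
and to show that, after reducing each summand to the basis given in Lemma~\ref{basisLem}, each lies in the span of the vectors listed in \eqref{derLieBasis}.

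For the second summand $B^yC^{u+x}A^v$, I apply the relation \eqref{redrel5} iteratively. One application of the form $BC^kA\mapsto(C^{k+1}-bC^k)/[m^k(m-1)]$ occurring inside $B^{y'}C^kA^{v'}$ reduces both the $B$- and $A$-exponents by one while either leaving the $C$-exponent unchanged or increasing it by one. After $n:=\min(y,v)$ such iterations, at least one of the residual exponents $y-n$, $v-n$ is zero, so each resulting monomial is of the form $C^kA^{v-y}$, $B^{y-v}C^k$, or $C^k$, with $k\geq u+x\geq 2$. Every such monomial is one of the basis vectors appearing in \eqref{derLieBasis}.

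For the first summand $C^uA^vB^yC^x$, I invoke Lemma~\ref{equalexpLem} with $n:=\min(v,y)$, which shows that $A^nB^n$ is a polynomial in $C$. Assume first that $v\geq y$; the other case is symmetric. Then $A^vB^y=A^{v-y}(A^yB^y)=A^{v-y}p(C)$ for some $p\in\F[C]$, and the reordering formula \eqref{reorderAC} allows us to rewrite $A^{v-y}p(C)$ as $q(C)A^{v-y}$ for some $q\in\F[C]$. Consequently,
\[
C^uA^vB^yC^x\;=\;q(C)\,C^uA^{v-y}C^x\;=\;m^{(v-y)x}\,q(C)\,C^{u+x}A^{v-y},
\]
and every monomial occurring in $q(C)\,C^{u+x}A^{v-y}$ has $C$-degree at least $u+x\geq 2$, so the whole expression lies in the span of \eqref{derLieBasis}. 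When $v<y$, the parallel argument using \eqref{reorderBC} in place of \eqref{reorderAC} produces $A^vB^y=B^{y-v}\tilde q(C)$, and the same conclusion follows with $B^{y-v}C^k$ terms in place of $C^kA^{v-y}$ terms.

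The main obstacle, and the reason the hypotheses $u,x\geq 1$ are essential, is to verify that no monomial of the form $\algI$, $A^l$, or $B^l$ (i.e., with $C$-exponent zero) survives in the final expression, since these would lie outside the span of \eqref{derLieBasis}. The strict bound $u+x\geq 2$ from the hypotheses guarantees a strictly positive $C$-exponent in every surviving monomial from both summands, which closes the argument.
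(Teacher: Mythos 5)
Your argument is correct, and it rests on the same two pillars as the paper's proof: Lemma~\ref{equalexpLem} turning balanced products into polynomials in $C$, and the hypothesis $u,x\geq 1$ forcing a strictly positive $C$-exponent in every surviving monomial. The difference is organizational. The paper does not split the commutator into its two products and reduce each from scratch; instead it writes (for $v\geq y$) $\lbrack C^uA^v,B^yC^x\rbrack = C^uA^{v-y}(A^yB^y)C^x-B^yC^{u+x}A^yA^{v-y}$, so that \emph{both} terms are handled uniformly by Lemma~\ref{equalexpLem} — the first via \eqref{divAB} and the second via \eqref{divBA} — after which \eqref{reorderAC}, \eqref{reorderBC} put everything into the form $C^kA^{v-y}$ with $k\geq 1$. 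You treat the first product the same way, but reduce the second product $B^yC^{u+x}A^v$ by iterating the defining relation \eqref{redrel5}; this is legitimate (the exponent of $C$ stays $\geq u+x\geq 1$ throughout, so \eqref{redrel5} always applies), but it amounts to re-deriving by hand a special case of what \eqref{divBA} already gives you, so the paper's grouping is a bit more economical while your route shows the conclusion can be reached without invoking \eqref{divBA} at all. Two cosmetic points: in the case $v<y$ of your first summand you still need \eqref{reorderBC} once more to move $C^u$ past $B^{y-v}$ before collecting the $C$-powers, and the bound you emphasize only needs $u+x\geq 1$, not $\geq 2$; neither affects correctness.
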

\begin{proof} We first consider the case $v\geq y$. The trick is that $\lbrack C^uA^v,B^yC^x\rbrack$ can be written as
\begin{eqnarray}
\lbrack C^uA^v,B^yC^x\rbrack = C^uA^{v-y}(A^yB^y)C^x-B^yC^{u+x}A^yA^{v-y},\label{vatleasty}
\end{eqnarray}
simply because $v-y$ is a nonnegative integer. Reorder the expressions $C^uA^{v-y}$ and $B^yC^{u+x}$ that appear in the right-hand side of \eqref{vatleasty} using \eqref{reorderAC},\eqref{reorderBC}. We obtain
\begin{eqnarray}
\lbrack C^uA^v,B^yC^x\rbrack = m^{u(y-v)}A^{v-y}P-m^{-y(u+x)}QA^{v-y},\label{vatleasty2}
\end{eqnarray}
where $P=C^u(A^yB^y)C^x$ and $Q=C^{u+x}(B^yA^y)$. By Lemma~\ref{equalexpLem}, $P$ and $Q$ are both polynomials in $C$. Also, the conditions $u>0$ and $x>0$ imply that the powers of $C$ that appear in the linear combinations for $P$ and $Q$ as polynomials in $C$ have exponents that are each at least $1$. This further implies that the first term $m^{u(y-v)}A^{v-y}P$, in the right-hand side of \eqref{vatleasty2}, is a linear combination of products of powers of $A$ and of $C$, which can be reordered using \eqref{reorderAC} such that  $m^{u(y-v)}A^{v-y}P$ can now be written as a linear combination of 
\begin{eqnarray}
C^kA^{v-y},\quad\quad (k\in\{1,2,\ldots\}).\label{CAvectors}
\end{eqnarray}
By inspection, $-m^{-y(u+x)}QA^{v-y}$, the second term in the right-hand side of \eqref{vatleasty2}, is also a linear combination of \eqref{CAvectors}, and by \eqref{vatleasty2}, so is $\lbrack C^uA^v,B^yC^x\rbrack$. The vectors \eqref{CAvectors} are among \eqref{derLieBasis}, and this completes the proof for the case $v\geq y$. By similar computations and arguments, it can be shown that for the case $v<y$, the expression $\lbrack C^uA^v,B^yC^x\rbrack$ is a linear combination of 
\begin{eqnarray}
B^{y-v}C^k,\quad\quad (k\in\{1,2,\ldots\}),
\end{eqnarray}
which are also among \eqref{derLieBasis}.
\end{proof}

\begin{lemma}\label{CpowerLem} For any positive integer $k$, the relation
\begin{eqnarray}
(1-m^{k+1})C^{k+1}=(1-m^k)bC^k-\frac{\left(\left(\ad B\right)\circ\left(\ad C\right)^k\right)(A)}{(1-m)^{k-1}m^{-k}}\label{powerofC}
\end{eqnarray}
holds in $\theAlg$.
\end{lemma}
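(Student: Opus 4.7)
The approach is direct calculation via induction and the algebra relations. The plan is to compute $(\ad C)^k(A)$ explicitly, then apply $\ad B$ and invoke the defining relations to rewrite the result as a polynomial in $C$, and finally solve for $(1-m^{k+1})C^{k+1}$.

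First I would establish, by induction on $k$, that
\begin{eqnarray}
\left(\ad C\right)^k(A) = (1-m)^k C^k A \nonumber
\end{eqnarray}
in $\theAlg$. The base case $k=1$ is immediate from \eqref{redrel2}, since $\lbrack C,A\rbrack = CA - AC = (1-m)CA$. For the inductive step, assuming the formula for $k$, one computes $\left(\ad C\right)^{k+1}(A) = (1-m)^k \lbrack C, C^k A\rbrack = (1-m)^k (C^{k+1}A - C^k A C)$; applying the reordering formula \eqref{reorderAC} in the form $AC = mCA$ (so that $C^k A C = m C^{k+1} A$) gives the claim for $k+1$.

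Next I would expand
\begin{eqnarray}
\bigl((\ad B)\circ(\ad C)^k\bigr)(A) = (1-m)^k\bigl(BC^k A - C^k AB\bigr). \nonumber
\end{eqnarray}
The first term $BC^k A$ is rewritten using relation \eqref{redrel5}, which gives a linear combination of $C^{k+1}$ and $C^k$. For the second term, I would use the fact that $C^k$ and $A$ commute only up to the scalar factor in \eqref{reorderAC} (which is not needed here since $A$ appears on the right) and then apply \eqref{redrel1} to replace $AB$ by $(mC-b\algI)/(m-1)$, yielding $C^k AB = (mC^{k+1}-bC^k)/(m-1)$. Subtracting the two expressions and combining fractions over the common denominator $m^k(m-1)$ produces
\begin{eqnarray}
BC^k A - C^k AB = \frac{(1-m^{k+1})C^{k+1}-(1-m^k)bC^k}{m^k(m-1)}. \nonumber
\end{eqnarray}

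Finally, multiplying by $(1-m)^k$ and using $(1-m)^k/(m-1) = -(1-m)^{k-1}$ gives
\begin{eqnarray}
\bigl((\ad B)\circ(\ad C)^k\bigr)(A) = -\frac{(1-m)^{k-1}}{m^k}\bigl[(1-m^{k+1})C^{k+1}-(1-m^k)bC^k\bigr], \nonumber
\end{eqnarray}
which rearranges to the desired identity \eqref{powerofC} after dividing by the coefficient $-(1-m)^{k-1}m^{-k}$ and isolating $(1-m^{k+1})C^{k+1}$. There is no real obstacle in this argument, which is purely computational; the only care needed is the bookkeeping of the scalars $(1-m)^k$, $(m-1)$, and $m^k$ and keeping track of signs when converting between $(1-m)$ and $(m-1)$.
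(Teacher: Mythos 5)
Your proof is correct and follows essentially the same route as the paper: establish $(\ad C)^k(A)=(1-m)^kC^kA$ by induction, apply $\ad B$, rewrite the result as a linear combination of $C^{k+1}$ and $C^k$, and solve for $(1-m^{k+1})C^{k+1}$. The only cosmetic difference is that you normalize $BC^kA$ directly via \eqref{redrel5}, whereas the paper first moves $B$ past $C^k$ using \eqref{reorderBC} and then invokes \eqref{redrel1} and \eqref{redrel3}; the scalar bookkeeping agrees in both versions.
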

\begin{proof} Using the relation \eqref{redrel2}, it can be shown by induction that the relation 
\begin{eqnarray}
(\ad C)^k (A) = (1-m)^k C^kA\label{adCA}
\end{eqnarray}
holds in $\theAlg$ for each positive integer $k$. Applying $\ad B$ on both sides of \eqref{adCA}, we get
\begin{eqnarray}
((\ad B)\circ(\ad C)^k) (A) = (1-m)^k (BC^kA-C^kAB).\label{adBadCA}
\end{eqnarray}
Using \eqref{reorderBC}, the right-hand side of \eqref{adBadCA} becomes $$(1-m)^kC^k(m^{-k}BA-AB),$$
in which $BA$ and $AB$ can be replaced by polynomials in $C$ using \eqref{redrel1}, \eqref{redrel3}. In the resulting equation, we solve for $(1-m^{k+1})C^{k+1}$, and the result is \eqref{powerofC}.
\end{proof}

\begin{theorem}\label{mainThm} If $m$ is not a root of unity, then the span of the basis vectors 
\begin{eqnarray}
\algI,\   A^n,\  B^n,\quad\quad (n\geq 2)\label{compLieBasis}
\end{eqnarray}
of $\theAlg$ is a linear complement of the Lie subalgebra of $\theAlg$ generated by $A,B$.
\end{theorem}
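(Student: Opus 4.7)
The plan is to identify the Lie subalgebra $\LieABC$ of $\theAlg$ generated by $A,B$ as the span of precisely those basis vectors from Lemma~\ref{basisLem} that are not listed in \eqref{compLieBasis}, namely
$$
A,\  B,\  C^k,\  C^kA^l,\  B^lC^k\quad (k,l\in\{1,2,\ldots\}).
$$
Since the basis of $\theAlg$ from Lemma~\ref{basisLem} is the disjoint union of this set with $\{\algI, A^n, B^n : n\geq 2\}$, the claimed direct sum decomposition follows at once. The argument splits into two parts: show each listed vector lies in $\LieABC$, and show the span of the listed vectors is closed under the Lie bracket, which forces $\LieABC$ to be contained in that span.

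For the first part, $C=\lbrack A,B\rbrack\in \LieABC$ by \eqref{redrel1} and \eqref{redrel3}. An induction on $k$ via Lemma~\ref{CpowerLem} yields $C^k\in \LieABC$ for all $k\geq 1$: granting $C^k\in \LieABC$, the right-hand side of \eqref{powerofC} is a scalar combination of $C^k$ and an iterated Lie bracket of elements of $\LieABC$, while the coefficient $1-m^{k+1}$ on the left is nonzero since $m$ is not a root of unity. Next, \eqref{table1} gives $C^kA=(m^k-1)^{-1}\lbrack A,C^k\rbrack\in \LieABC$ (the factor $m^k-1$ is again nonzero by the hypothesis on $m$), and induction on $l$ using \eqref{table2} yields $C^kA^l\in \LieABC$ for all $k,l\geq 1$. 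The vectors $B^lC^k$ are handled symmetrically using \eqref{table5} and \eqref{table6}.

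For the second part, I would check that every bracket of two listed vectors (including with $A$ and $B$) lies in their span. The ``same-side'' brackets $\lbrack C^k,C^xA^y\rbrack$, $\lbrack C^kA^l,C^xA^y\rbrack$, $\lbrack C^k,B^yC^x\rbrack$, $\lbrack B^lC^k,B^yC^x\rbrack$, and brackets of $A$ (respectively $B$) with a pure power of $C$ or with a term of type $C^xA^y$ (resp.\ $B^yC^x$), are all covered by the formulae \eqref{table1}--\eqref{table8} of Example~\ref{tableEx1}. The ``mixed'' brackets $\lbrack A,B^yC^x\rbrack$ and $\lbrack B,C^xA^y\rbrack$ are covered by \eqref{tablei}--\eqref{tableii} of Example~\ref{tableEx2}, and the remaining bracket type $\lbrack C^uA^v,B^yC^x\rbrack$ is precisely the content of Lemma~\ref{tableLem}. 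Boundary cases in which an exponent drops to zero produce pure powers of $C$, which are also in the span. Hence the span is a Lie subalgebra of $\theAlg$ containing $A,B$, so it contains $\LieABC$, giving equality.

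The main obstacle is really the mixed bracket $\lbrack C^uA^v,B^yC^x\rbrack$ handled in Lemma~\ref{tableLem}: its reordering requires splitting into the subcases $v\geq y$ and $v<y$ and invoking Lemma~\ref{equalexpLem} to reduce the central factor $A^yB^y$ (or $B^yA^y$) to a polynomial in $C$. Once this is in hand and the hypothesis that $m$ is not a root of unity permits division by the factors $m^k-1$ and $1-m^{k+1}$, the remaining steps are routine applications of the reordering formulae in Examples~\ref{tableEx1} and~\ref{tableEx2}, after which Lemma~\ref{basisLem} delivers the direct sum decomposition.
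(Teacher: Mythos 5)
Your proposal is correct and follows essentially the same route as the paper: the same candidate span of the basis vectors $A,\ B,\ C^k,\ C^kA^l,\ B^lC^k$, the same bracket-closure check via \eqref{table1}--\eqref{table8}, \eqref{tablei}, \eqref{tableii} and Lemma~\ref{tableLem}, and the same containment argument via Lemma~\ref{CpowerLem} and division by $m^k-1$. The only cosmetic difference is that you obtain $C^kA^l$ and $B^lC^k$ by induction on $l$ using \eqref{table2} and \eqref{table6}, whereas the paper packages the same induction into the closed-form relations \eqref{adApowerC} and \eqref{adBpowerC}.
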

\def\candidate{\mathcal{K}}
\def\theLieAlg{\mathcal{L}}
\begin{proof} Denote by $\theLieAlg$ the Lie subalgebra of $\theAlg$ generated by $A,B$, and let $\candidate$ be the span of all vectors in the basis \eqref{theBasis} of $\theAlg$ that are not in \eqref{compLieBasis}. That is, the vectors
\begin{eqnarray}
A,\  B,\  C^k,\  C^kA^l,\  B^lC^k,\quad\quad (k,l\in\{1,2,\ldots\}),\label{LieBasis}
\end{eqnarray}
form a basis for $\candidate$. Then we are done if we are able to show $\candidate=\theLieAlg$. By the minimality (with respect to set inclusion) of $\theLieAlg$ among all Lie subalgebras of $\theAlg$ that contain $A,B$, a sufficient condition for $\candidate$ to be equal to $\theLieAlg$ is:
\begin{enumerate}[label=\upshape(\roman*), leftmargin=*, widest=iii]
    \item $A,B\in\candidate$; and\label{hasAB}
    \item $\candidate$ is a Lie subalgebra of $\theAlg$; and\label{Liesub}
    \item $\candidate\subseteq\theLieAlg$.\label{intheLieAlg}
\end{enumerate}
The condition \ref{hasAB} follows immediately from the definition of $\candidate$. To prove \ref{Liesub}, we show that for any basis vectors $L,R$ of $\candidate$ among \eqref{LieBasis}, $\lbrack L,R\rbrack$ is a linear combination of \eqref{LieBasis}. We summarize all the cases in the following table.
\begin{center}
\begin{tabular}{|c|c|c|c|c|c|}
\hline
 & $A$ & $B$ & $C^x$ & $C^xA^y$ & $B^yC^x$ \\
 \hline
$A$ & $0$ & $C$ & \eqref{table1} & \eqref{table2} & \eqref{tablei} \\
 \hline
$B$ &  & $0$ & \eqref{table5} & \eqref{tableii} & \eqref{table6} \\
 \hline
$C^k$ &  &  & $0$ & \eqref{table3} & \eqref{table7} \\
 \hline
$C^kA^l$ &  &  &  & \eqref{table4} & Lemma~\ref{tableLem}\\
 \hline
$B^lC^k$ &  &  &  & & \eqref{table8}\\
 \hline
\end{tabular}
\end{center}
All the possiblities for $L$ are written in the first column of the above table, while all the possibilities for $R$ are listed in the first row. The cells that are neither in the first column nor in the first row indicate something about $\lbrack L,R\rbrack$. If the cell indicates $0$ or $C$, then $\lbrack L,R\rbrack$ is equal to the content of that cell. If the cell contains an equation number, then that equation shows how $\lbrack L,R\rbrack$ is a linear combination of \eqref{LieBasis}. The most difficult case corresponds to the cell in the table that indicates ``Lemma~\ref{tableLem},'' and this lemma explains why $\lbrack L,R\rbrack$ for this case is a linear combination of \eqref{LieBasis}. The cells below the main diagonal are left empty because of the skew-symmetry of the Lie bracket. This completes our proof of \ref{Liesub}. To prove \ref{intheLieAlg}, we show that for any basis vector $U$ of $\candidate$ among \eqref{LieBasis}, $U\in\theLieAlg$. The proof is clear for the cases $U=A$, $U=B$, $U=C=\lbrack A,B\rbrack$. By Lemma~\ref{CpowerLem}, it can be shown that $C^h\in\theLieAlg$ for any positive integer $h$ using induction. We are left with the last two possible forms of $U$. By this we mean either $U=C^kA^l$ or $U=B^lC^k$ for some positive integers $k,l$. For these cases, observe that by \eqref{reorderAC}, \eqref{reorderBC}, it can be shown that the relations
\begin{eqnarray}
(\ad A)^l(C^k) & = & (m^k-1)^l C^kA^l,\label{adApowerC}\\
(\ad B)^l(C^k) & = & (1-m^k)^l B^lC^k, \label{adBpowerC}
\end{eqnarray}
hold in $\theAlg$ for any positive integers $k,l$, using induction. Since we have already established that $C^k\in\theLieAlg$, the left-hand side of \eqref{adApowerC} is an element of $\theLieAlg$. Since $m$ is not a root of unity, we can isolate $C^kA^l$ in \eqref{adApowerC}, and conclude that $C^kA^l\in\theLieAlg$. By similar arguments and computations, the condition $B^lC^k\in\theLieAlg$ can be deduced from \eqref{adBpowerC}. This proves \ref{intheLieAlg}. Therefore, $\candidate=\theLieAlg$.
\end{proof}

\subsection*{Acknowledgements}
This paper was completed while the author was a post-doctoral research guest in M\"alardalen University, V\"aster\aa s, Sweden, hosted by Prof. Sergei Silvestrov, to  whom the author expresses his thanks, and also to Dr. Lars Hellstr\"om for very  valuable insights. The author acknowledges the unerring support of his home department, the Mathematics and Statistics Department of the College of Science, De La Salle University, Manila. This work was partially supported by a grant from the Commission for Developing Countries of the International Mathematical Union (IMU-CDC).


\end{document}